\documentclass[12pt]{amsart}%
\usepackage[latin9]{inputenc}
\usepackage{amstext}
\usepackage{amsthm}
\usepackage{amssymb}
\usepackage{amsfonts}
\usepackage{amscd}
\usepackage{amsmath}
\usepackage{graphicx}%
\setcounter{MaxMatrixCols}{30}
\providecommand{\U}[1]{\protect\rule{.1in}{.1in}}
\providecommand{\U}[1]{\protect\rule{.1in}{.1in}}
\marginparwidth 0pt   \marginparsep 0pt
\oddsidemargin -0.1in \evensidemargin 0pt
\topmargin -.3in
\textwidth 6.5in
\textheight 8.5in

\newtheorem{theorem}{Theorem}[section]
\newtheorem{lemma}{Lemma}[section]

\newtheorem{corollary}{Corollary}[section]

\numberwithin{equation}{section}

\theoremstyle{remark}
\newtheorem{remark}{Remark}[section]
\numberwithin{equation}{section}
\begin{document}
\title{A note on the Almost Schur lemma on smooth metric measure spaces}
\author{Jui-Tang Chen$^{\ast}$}
\address{Department of Mathematics, National Taiwan Normal University, Taipei 11677,
Taiwan, R.O.C.}
\email{jtchen@ntnu.edu.tw}
\thanks{$^{\ast}$Research supported in part by MOST of Taiwan.}
\thanks{Mathematics Subject Classification (2010): Primary 58J50, 53C23; secondary,
53C21, 53C24}
\keywords{Bakry-Émery Ricci tensor, smooth metric measure space, Einstein, Almost Schur Lemma}

\begin{abstract}
In this paper, we prove almost Schur Lemma on closed smooth metric measure
spaces, which implies the results of X. Cheng \cite{C1} and De Lellis-Topping
\cite{LT} whenever the weighted function $f$ is constant.

\end{abstract}
\maketitle

\section{Introduction}

\bigskip{}

In 2012, De Lellis and C. Topping \cite{LT} proved an almost Schur Lemma, that
is, if a closed Riemannian manifold has nonnegative Ricci curvature, they
showed an almost Schur inequality involves scalar curvature and Ricci
curvature:
\[%
\begin{array}
[c]{c}%
\int_{M}\left(  R-\overline{R}\right)  ^{2}dv\leq\frac{4n\left(  n-1\right)
}{\left(  n-2\right)  ^{2}}\int_{M}\left\vert Ric-\frac{R}{n}g\right\vert
^{2}dv.
\end{array}
\]
In particular, the equality holds if and only if this manifold is Einstein and
constant scalar curvature.

Later, in \cite{GW}, Y. Ge and G. Wang proved the almost Schur Lemma without
the condition of nonnegative Ricci curvature in $4$-dimension closed
Riemannian manifold, i.e. they just assume the nonnegative of scalar curvature
(or see \cite{B} for $4$-dimension closed Riemannian manifold with Yamabe invariant).

In \cite{C1}, X. Cheng considered the closed Riemannian manifolds with
negative Ricci curvature and obtained a generalization of the De
Lellis-Topping's theorem (or see \cite{C2}). For more references, see
\cite{CDW}\cite{CSW}\cite{H}\cite{W}.

In this paper, we study almost Schur Lemma on a smooth metric measure space.
First of all, we recall some definitions of smooth metric measure space.

For an $n$-dimensional closed Riemannian manifolds $\left(  M^{n},g\right)  $
and a smooth function $f$ on $M.$ A triple $\left(  M^{n},g,dv_{f}\right)  $
is a smooth metric measure space with weighted volume identity $dv_{f}%
=e^{-f\left(  x\right)  }dv,$ where $dv$ is the volume element of $M$ with
respect to the metric $g.$ Let $\left(  \nabla f\otimes\nabla u\right)
_{ij}=\frac{1}{2}\left(  f_{,i}u_{,j}+f_{,j}u_{,i}\right)  ,$ and let $Hess$
be the Hessian of the metric $g,$ we define the weighted Laplacian by the
trace of
\[%
\begin{array}
[c]{l}%
\left(  Hess_{f}u\right)  _{ij}\equiv\left(  Hessu\right)  _{ij}-\left(
\nabla f\otimes\nabla u\right)  _{ij},
\end{array}
\]
i.e.
\[%
\begin{array}
[c]{l}%
\Delta_{f}u=\Delta u-\left\langle \nabla f,\nabla u\right\rangle
\end{array}
\]
which is a self-adjoint operator concerning $dv_{f}.$

It is natural to consider the $m$-Bakry-Émery and $\infty$-Bakry-Émery Ricci
tensor on smooth metric measure space by
\[%
\begin{array}
[c]{l}%
Ric_{f}^{m}=Ric+Hessf-\frac{1}{m}\nabla f\otimes\nabla f,\text{ }m>0,
\end{array}
\]
and
\[%
\begin{array}
[c]{l}%
Ric_{f}=Ric+Hessf
\end{array}
\]
respectively. If $Ric_{f}=\lambda g$ (or $Ric_{f}^{m}=\lambda g$) for some
$\lambda\in%
\mathbb{R}
$, then $M$ is quasi-Einstein (or $m$-quasi-Einstein), and $Ric_{f}=\lambda g
$ is the gradient Ricci soliton equation. Whenever $f$ is a constant, then $M
$ is called trivial Einstein.

According to the classical Bochner's formula, we have a similar formula
\[%
\begin{array}
[c]{l}%
\frac{1}{2}\Delta_{f}\left\vert \nabla u\right\vert ^{2}=\left\vert
Hessu\right\vert ^{2}+\left\langle \nabla u,\nabla\Delta_{f}u\right\rangle
+Ric_{f}\left(  \nabla u,\nabla u\right)
\end{array}
\]
for $u\in C^{3}\left(  M\right)  $ on $M.$ Hence, there are many results may
be extended from Riemannian manifolds to smooth metric measure spaces. We
refer the reader to, for example \cite{BQ}\cite{CZ}\cite{MW1}\cite{MW2}%
\cite{W}\cite{WE}\cite{WCS}\cite{WW} for further references.

In fact, if $R$ is the scalar curvature of $M$ with respect to the metric $g,
$
\[%
\begin{array}
[c]{l}%
N_{f}^{m}\equiv\left(  R+\frac{2\left(  m-1\right)  }{m}\Delta f-\frac{m-1}%
{m}\left\vert \nabla f\right\vert ^{2}\right)  e^{-\frac{2}{m}}%
\end{array}
\]
and
\[%
\begin{array}
[c]{l}%
\overline{N}_{f}^{m}=\frac{\int_{M}N_{f}^{m}dv_{f}}{\int_{M}dv_{f}},
\end{array}
\]
J.-Y. Wu \cite{W} generalized De Lellis-Topping's result as follows: Let
$\left(  M^{n},g,e^{-f}dv\right)  $ be a closed smooth metric measure space.
For any positive number $m\neq2,$ if
\[%
\begin{array}
[c]{l}%
Ric_{f}^{m}\geq\frac{\left\vert \nabla f\right\vert ^{2}}{m}g,
\end{array}
\]
then
\[%
\begin{array}
[c]{l}%
\int_{M}\left(  N_{f}^{m}-\overline{N}_{f}^{m}\right)  ^{2}e^{-f}dv\leq
\frac{4\left(  m+1\right)  \left(  m-2\right)  }{m^{3}}\int_{M}\left\vert
Ric_{f}^{m}+\frac{trRic_{f}^{m}}{m-2}g\right\vert ^{2}e^{-\frac{m+4}{m}f}dv.
\end{array}
\]
Moreover equality holds if and only if
\[%
\begin{array}
[c]{l}%
Ric_{f}^{m}+\frac{trRic_{f}^{m}}{m-2}g=0.
\end{array}
\]

\bigskip{}

Let $R$ be the nontrivial scalar curvature on $M$ with respect to metric $g,$
and we denote $R_{f}=R+\Delta f,$ $V_{f}(M)=\int_{M}dv_{f}$, $\overline{R_{f}%
}=\frac{\int_{M}R_{f}dv_{f}}{V_{f}(M)},$ $\overline{R}=\frac{\int_{M}Rdv_{f}%
}{V_{f}(M)}$, $R\mathring{i}c=Ric-\frac{R}{n}g$, and $R\mathring{i}%
c_{f}=Ric_{f}-\frac{R_{f}}{n}g.$

Now we state our results:

\begin{theorem}
\label{T2}Let $\left(  M^{n},g,dv_{f}\right)  ,$ $n>2,$ be a closed smooth
metric measure space. If
\[%
\begin{array}
[c]{c}%
Ric_{f}\geq\left(  \Delta f-\left(  n-1\right)  K\right)  g,
\end{array}
\]
then
\begin{equation}%
\begin{array}
[c]{c}%
\left\Vert R_{f}-\overline{R_{f}}\right\Vert _{L^{2}}\leq\frac{2n\sqrt{A}%
}{n-2}\left\Vert R\mathring{i}c_{f}-Hessf\right\Vert _{L^{2}}+\left\Vert
\Delta f\right\Vert _{L^{2}},
\end{array}
\label{eq}%
\end{equation}
where $\left\Vert \cdot\right\Vert _{L^{2}}^{2}=\int_{M}\left(  \cdot\right)
^{2}dv_{f}$,
\[%
\begin{array}
[c]{l}%
A=\frac{n-1}{n}+\frac{1}{\lambda_{1}}\left(  n-1\right)  K,
\end{array}
\]
and $\lambda_{1}$ is the first positive eigenvalue of the weighted Laplacian
$\Delta_{f}.$ Moreover, equality holds if and only if $M$ is trivial Einstein
and constant scalar curvature with respect to metric $g.$
\end{theorem}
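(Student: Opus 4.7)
The plan is to adapt the De Lellis--Topping Poisson-equation strategy to the weighted setting. Since $R_f-\overline{R_f}$ has zero $f$-mean, solve the weighted Poisson equation $\Delta_f u = R_f-\overline{R_f}$; weighted integration by parts gives
\[
\|R_f-\overline{R_f}\|_{L^2}^2\;=\;-\int_M\nabla R_f\cdot\nabla u\,dv_f.
\]
I then rewrite $\nabla R_f$ via a weighted Bianchi-type identity for $T:=R\mathring{i}c_f-Hess f$. A direct computation shows $T=Ric-\tfrac{R_f}{n}g$, so $tr(T)=-\Delta f$; combining the second Bianchi identity $\nabla^i Ric_{ij}=\tfrac12\nabla_j R$ with the commutation $\nabla^i\nabla_i\nabla_j f=\nabla_j\Delta f+R_{ij}\nabla^i f$ yields
\[
\nabla^i T_{ij}\;=\;\tfrac{n-2}{2n}\nabla_j R_f-\tfrac{1}{2}\nabla_j\Delta f,
\]
after which one more weighted integration by parts produces
\[
\|R_f-\overline{R_f}\|_{L^2}^2\;=\;\tfrac{2n}{n-2}\int_M\langle T,Hess_f u\rangle\,dv_f+\tfrac{n}{n-2}\int_M\Delta f\,(R_f-\overline{R_f})\,dv_f.
\]

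The clean coefficient $1$ on $\|\Delta f\|_{L^2}$ in \eqref{eq} comes from a trace decomposition. Since $tr(Hess_f u)=\Delta_f u=R_f-\overline{R_f}$ and $tr(T)=-\Delta f$, I write
\[
\langle T,Hess_f u\rangle\;=\;\langle T,(Hess_f u)_0\rangle-\tfrac{1}{n}\Delta f\,(R_f-\overline{R_f}),\quad(Hess_f u)_0:=Hess_f u-\tfrac{\Delta_f u}{n}g,
\]
and after substitution, the $\Delta f$-coefficients collapse as $\tfrac{n}{n-2}-\tfrac{2}{n-2}=1$, producing
\[
\|R_f-\overline{R_f}\|_{L^2}^2\;=\;\tfrac{2n}{n-2}\int_M\langle T,(Hess_f u)_0\rangle\,dv_f+\int_M\Delta f\,(R_f-\overline{R_f})\,dv_f.
\]
Cauchy--Schwarz on the second term produces $\|\Delta f\|_{L^2}\|R_f-\overline{R_f}\|_{L^2}$ directly, and on the first reduces matters to bounding $\|(Hess_f u)_0\|_{L^2}$.

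For the Hessian estimate, I expand $|Hess_f u|^2=|Hess u|^2-2\langle Hess u,\nabla f\otimes\nabla u\rangle+|\nabla f\otimes\nabla u|^2$ and apply the weighted Bochner formula together with the hypothesis $Ric_f\ge(\Delta f-(n-1)K)g$. The $\Delta f$ cross-terms then cancel exactly---this is precisely the role of $\Delta f$ in the curvature assumption---and the nonpositive remainder $\tfrac12\int(\langle\nabla f,\nabla u\rangle^2-|\nabla f|^2|\nabla u|^2)dv_f$ may be dropped, yielding
\[
\|Hess_f u\|_{L^2}^2\;\le\;\|R_f-\overline{R_f}\|_{L^2}^2+(n-1)K\,\|\nabla u\|_{L^2}^2.
\]
Subtracting the trace contribution $\tfrac{1}{n}\|R_f-\overline{R_f}\|_{L^2}^2$ and inserting the weighted Poincar\'e bound $\|\nabla u\|_{L^2}^2\le\lambda_1^{-1}\|R_f-\overline{R_f}\|_{L^2}^2$ gives $\|(Hess_f u)_0\|_{L^2}\le\sqrt{A}\,\|R_f-\overline{R_f}\|_{L^2}$; dividing the identity through by $\|R_f-\overline{R_f}\|_{L^2}$ delivers \eqref{eq}. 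The main obstacle is the trace-decomposition bookkeeping: a naive Cauchy--Schwarz before decomposing yields coefficient $\tfrac{n}{n-2}$ on $\|\Delta f\|_{L^2}$ rather than $1$, so the traceless split of $Hess_f u$ must be performed first. In the equality case, every Cauchy--Schwarz and pointwise bound must saturate (in particular $\langle\nabla f,\nabla u\rangle^2=|\nabla f|^2|\nabla u|^2$ and the Bochner/Ricci bound), and tracing these back forces $\nabla f\equiv 0$ together with $Ric=\tfrac{R}{n}g$ and $R$ constant, i.e., $M$ is trivially Einstein with constant scalar curvature.
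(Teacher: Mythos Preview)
Your proposal is correct and follows essentially the same strategy as the paper: solve $\Delta_f u=R_f-\overline{R_f}$, convert $\nabla R_f$ via a Bianchi-type identity for $T=R\mathring{i}c_f-Hess f$, split off the trace of $Hess_f u$ with $h=\Delta_f u/n$ (which is exactly the paper's choice \eqref{hh}) so that the $\Delta f$-coefficient collapses to $1$, and then control $\|(Hess_f u)_0\|_{L^2}$ by the weighted Bochner formula plus the Poincar\'e inequality. Your simplification $T=Ric-\tfrac{R_f}{n}g$ streamlines the divergence computation compared with the paper's \eqref{001} (so the commutation identity you cite is actually unnecessary), and your equality sketch matches the paper's conditions (i)--(iv); the concrete step that pins down $\nabla f\equiv0$ is that the Cauchy--Schwarz equality on the $\Delta f$-term gives $\Delta_f u=\mu_2\,\Delta f$, whence $0=\int_M\Delta_f u\,dv_f=\mu_2\int_M\Delta f\,dv_f=\mu_2\int_M|\nabla f|^2\,dv_f$.
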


\bigskip{}

\begin{theorem}
\label{T1}Let $\left(  M^{n},g,dv_{f}\right)  ,$ $n>2,$ be a closed smooth
metric measure space. If
\[%
\begin{array}
[c]{c}%
Ric_{f}^{m}\geq\left(  \frac{1}{m}\left\vert \nabla f\right\vert ^{2}-\left(
n-1\right)  K\right)  g
\end{array}
\]
for any positive constant $m,$ then
\begin{equation}%
\begin{array}
[c]{c}%
\int_{M}\left(  R-\overline{R}\right)  ^{2}dv_{f}\leq\frac{4n^{2}A}{\left(
n-2\right)  ^{2}}\int_{M}\left\vert R\mathring{i}c\right\vert ^{2}dv_{f}.
\end{array}
\label{eq1}%
\end{equation}
where
\[%
\begin{array}
[c]{c}%
A=\frac{n-1}{n}+\frac{m}{2}+\frac{\left(  m+2\right)  \left(  n-1\right)
}{2\lambda_{1}}K.
\end{array}
\]
Moreover, equality holds if and only if $M$ is trivial Einstein and constant
scalar curvature with respect to metric $g$.
\end{theorem}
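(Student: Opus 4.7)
The plan is to adapt the De Lellis--Topping argument to the weighted setting, exploiting a sharp algebraic cancellation available for a distinguished choice of parameter in a Young-type inequality. First, I would solve the Poisson equation $\Delta_f u = R - \overline{R}$: this has a solution because $\int_M(R-\overline{R})\,dv_f=0$, and I would normalize $u$ so that $\int_M u\,dv_f=0$. Multiplying by $(R-\overline{R})$, integrating against $dv_f$, and using in turn the self-adjointness of $\Delta_f$, the second Bianchi identity $\nabla R = \tfrac{2n}{n-2}\mathrm{div}(R\mathring{i}c)$, and the weighted integration-by-parts identity
\begin{equation*}
\int_M\langle\mathrm{div}\,T,\nabla u\rangle\,dv_f = -\int_M\langle T,Hess\,u\rangle\,dv_f + \int_M T(\nabla f,\nabla u)\,dv_f
\end{equation*}
for symmetric $2$-tensors $T$, I expect to obtain
\begin{equation*}
\int_M(R-\overline{R})^2\,dv_f = \tfrac{2n}{n-2}\int_M\Big\langle R\mathring{i}c,\,Hess_f u - \tfrac{\Delta_f u}{n}g\Big\rangle\,dv_f,
\end{equation*}
the trace correction $\tfrac{\Delta_f u}{n}g$ entering for free since $R\mathring{i}c$ is trace-free. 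Setting $\widetilde{Hess_f u} := Hess_f u - \tfrac{\Delta_f u}{n}g$ and applying Cauchy--Schwarz then gives
\begin{equation*}
\int_M(R-\overline{R})^2\,dv_f \le \tfrac{2n}{n-2}\|R\mathring{i}c\|_{L^2}\|\widetilde{Hess_f u}\|_{L^2}.
\end{equation*}

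The heart of the proof, and the step I expect to be the main obstacle, is the sharp $L^2$ estimate $\|\widetilde{Hess_f u}\|_{L^2}^2 \le A\,\|R-\overline{R}\|_{L^2}^2$, since combined with the above, squaring and dividing by $\|R-\overline{R}\|_{L^2}$ would deliver the theorem. The crucial trick I would use is to apply Young's inequality $|X-Y|^2 \le (1+\delta)|X|^2 + (1+\delta^{-1})|Y|^2$ to $Hess_f u = Hess\,u - \nabla f\otimes\nabla u$ with the distinguished choice $\delta = m/2$, producing
\begin{equation*}
|Hess_f u|^2 \le \tfrac{m+2}{2}|Hess\,u|^2 + \tfrac{m+2}{2m}\bigl(|\nabla f|^2|\nabla u|^2 + \langle\nabla f,\nabla u\rangle^2\bigr).
\end{equation*}
Meanwhile, the integrated weighted Bochner formula gives $\int_M|Hess\,u|^2\,dv_f = \|R-\overline{R}\|_{L^2}^2 - \int_M Ric_f(\nabla u,\nabla u)\,dv_f$, and combining this with $Ric_f = Ric_f^m + \tfrac{1}{m}\nabla f\otimes\nabla f$ and the hypothesis on $Ric_f^m$ yields
\begin{equation*}
\int_M Ric_f(\nabla u,\nabla u)\,dv_f \ge \tfrac{1}{m}\int_M\bigl(|\nabla f|^2|\nabla u|^2 + \langle\nabla f,\nabla u\rangle^2\bigr)dv_f - (n-1)K\int_M|\nabla u|^2\,dv_f.
\end{equation*}
Multiplying the resulting upper bound on $\int_M|Hess\,u|^2\,dv_f$ by $(m+2)/2$ and feeding it into the Young estimate should make the $|\nabla f|^2|\nabla u|^2 + \langle\nabla f,\nabla u\rangle^2$ contributions cancel \emph{exactly}---this is precisely why $\delta=m/2$ is the right choice---leaving
\begin{equation*}
\|Hess_f u\|_{L^2}^2 \le \tfrac{m+2}{2}\|R-\overline{R}\|_{L^2}^2 + \tfrac{(m+2)(n-1)K}{2}\int_M|\nabla u|^2\,dv_f.
\end{equation*}

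To conclude, I would use the Poincaré inequality for $\Delta_f$ on the mean-zero $u$, $\int_M|\nabla u|^2\,dv_f \le \tfrac{1}{\lambda_1}\|R-\overline{R}\|_{L^2}^2$, and subtract $\tfrac{1}{n}\|R-\overline{R}\|_{L^2}^2$ to pass from $\|Hess_f u\|_{L^2}^2$ to $\|\widetilde{Hess_f u}\|_{L^2}^2$; this should produce precisely the constant $A = \tfrac{n-1}{n} + \tfrac{m}{2} + \tfrac{(m+2)(n-1)K}{2\lambda_1}$. Substituting back finishes the main inequality. For the equality case, I would trace through the chain: equality in Cauchy--Schwarz makes $\widetilde{Hess_f u}$ pointwise proportional to $R\mathring{i}c$; equality in the Young step with $\delta=m/2$ forces $Hess\,u = -\tfrac{2}{m}\nabla f\otimes\nabla u$; equality in Poincaré forces $u$ to be a first eigenfunction of $\Delta_f$; and equality in the Bochner/curvature step saturates the pointwise bound on $Ric_f^m$. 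Taken together, these constraints should force $\nabla f\equiv 0$ so that $f$ is constant, $R\mathring{i}c\equiv 0$, and the manifold is trivially Einstein with constant scalar curvature.
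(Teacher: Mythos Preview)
Your proof of the main inequality is correct and follows the paper's argument essentially verbatim: the same Poisson equation, the same Bianchi/integration-by-parts identity, the same Cauchy--Schwarz step, and---crucially---the same Young inequality with parameter $\delta=m/2$ combined with the weighted Bochner formula so that the $|\nabla f|^2|\nabla u|^2+\langle\nabla f,\nabla u\rangle^2$ terms cancel exactly, followed by the Poincar\'e bound $\int_M|\nabla u|^2\,dv_f\le\lambda_1^{-1}\|R-\overline R\|_{L^2}^2$.

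For the equality case your outline is vaguer than the paper's, and the claim that the constraints ``should force $\nabla f\equiv 0$'' is not what the paper actually proves (nor is it obviously true). The paper uses only the Young equality condition $Hess\,u=-\tfrac{2}{m}\nabla f\otimes\nabla u$: taking the trace gives $\Delta u+\tfrac{2}{m}\langle\nabla f,\nabla u\rangle=0$, i.e.\ $\Delta_{-2f/m}u=0$, so $u$ is weighted-harmonic on a closed manifold for the weight $e^{2f/m}dv$ and hence constant; since $\int_M u\,dv_f=0$ this forces $u\equiv 0$, whence $R\equiv\overline R$ and both sides vanish, so $R\mathring{i}c\equiv 0$. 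You should replace your sketch of the rigidity step with this direct argument.
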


\begin{theorem}
\label{T3}Let $\left(  M^{n},g,dv_{f}\right)  ,$ $n>2,$ be a closed smooth
metric measure space. If
\[%
\begin{array}
[c]{c}%
Ric_{f}\geq\left(  \Delta f-\left(  n-1\right)  K\right)  g,
\end{array}
\]
then
\begin{equation}%
\begin{array}
[c]{c}%
\int_{M}\left(  R-\overline{R}\right)  ^{2}dv_{f}\leq\frac{4n^{2}A}{\left(
n-2\right)  ^{2}}\int_{M}\left\vert R\mathring{i}c\right\vert ^{2}dv_{f}%
\end{array}
\label{T3e}%
\end{equation}
where
\[%
\begin{array}
[c]{l}%
A=\frac{n-1}{n}+\frac{1}{\lambda_{1}}\left(  n-1\right)  K.
\end{array}
\]

\end{theorem}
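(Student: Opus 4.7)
The plan is to follow Cheng's argument~\cite{C1} for the Riemannian case, adapted to the weighted setting through the self-adjoint weighted Laplacian $\Delta_{f}$, a weighted integration-by-parts version of the contracted Bianchi identity, and a Bochner-type bound on $|Hess_{f}u|^{2}$ whose constants match those of the classical case.

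First I would solve the Poisson equation $\Delta_{f}u = R - \overline{R}$ with the normalization $\int_{M}u\,dv_{f} = 0$, which is solvable because $\int_{M}(R-\overline{R})\,dv_{f} = 0$ and $\Delta_{f}$ is self-adjoint on $L^{2}(dv_{f})$. Combining the contracted second Bianchi identity $\nabla^{i}R\mathring{i}c_{ij} = \tfrac{n-2}{2n}\nabla_{j}R$ with self-adjointness yields
\[
\int_{M}(R-\overline{R})^{2}\,dv_{f} \;=\; -\frac{2n}{n-2}\int_{M}\langle\operatorname{div}(R\mathring{i}c),\nabla u\rangle\,dv_{f}.
\]
For any symmetric $2$-tensor $T$ on $M$ one has the weighted IBP $\int_{M}\langle\operatorname{div}(T),\nabla u\rangle\,dv_{f} = -\int_{M}\langle T,\nabla^{2}u\rangle\,dv_{f} + \int_{M}T(\nabla f,\nabla u)\,dv_{f}$, and the two right-hand terms combine cleanly via $Hess_{f}u = \nabla^{2}u - \nabla f\otimes\nabla u$ into $-\int_{M}\langle R\mathring{i}c, Hess_{f}u\rangle\,dv_{f}$. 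Since $R\mathring{i}c$ is trace-free, I may subtract $(\Delta_{f}u/n)g$ from $Hess_{f}u$ inside the inner product, and Cauchy--Schwarz yields
\[
\int_{M}(R-\overline{R})^{2}\,dv_{f} \;\le\; \frac{2n}{n-2}\,\|R\mathring{i}c\|_{L^{2}}\left\|Hess_{f}u - \frac{\Delta_{f}u}{n}g\right\|_{L^{2}}.
\]

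The technical heart of the proof is the estimate $\left\|Hess_{f}u - \tfrac{\Delta_{f}u}{n}g\right\|_{L^{2}}^{2}\le A\,\|\Delta_{f}u\|_{L^{2}}^{2}$, which, since $\bigl|Hess_{f}u - (\Delta_{f}u/n)g\bigr|^{2} = |Hess_{f}u|^{2} - (\Delta_{f}u)^{2}/n$, reduces to $\int|Hess_{f}u|^{2}\,dv_{f}\le\bigl(1+\tfrac{(n-1)K}{\lambda_{1}}\bigr)\int(\Delta_{f}u)^{2}\,dv_{f}$. I would derive this starting from the pointwise identity $|Hess_{f}u|^{2} = |\nabla^{2}u|^{2} - 2\nabla^{2}u(\nabla f,\nabla u) + |\nabla f\otimes\nabla u|^{2}$. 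Applying $2\nabla^{2}u(\nabla f,\nabla u) = \langle\nabla f,\nabla|\nabla u|^{2}\rangle$ together with the weighted IBP $\int\langle\nabla f,\nabla g\rangle\,dv_{f} = -\int g(\Delta f - |\nabla f|^{2})\,dv_{f}$ evaluates the cross term, while the integrated weighted Bochner $\int|\nabla^{2}u|^{2}\,dv_{f} = \int(\Delta_{f}u)^{2}\,dv_{f} - \int Ric_{f}(\nabla u,\nabla u)\,dv_{f}$ handles the first; summing gives the identity
\[
\int_{M}|Hess_{f}u|^{2}\,dv_{f} = \int_{M}(\Delta_{f}u)^{2}\,dv_{f} + \int_{M}(\Delta f\cdot g - Ric_{f})(\nabla u,\nabla u)\,dv_{f} - \int_{M}\bigl(|\nabla f|^{2}|\nabla u|^{2} - |\nabla f\otimes\nabla u|^{2}\bigr)dv_{f}.
\]
The curvature hypothesis gives $\Delta f\cdot g - Ric_{f}\le (n-1)Kg$, and pointwise Cauchy--Schwarz gives $|\nabla f\otimes\nabla u|^{2}\le |\nabla f|^{2}|\nabla u|^{2}$, so both correction integrals have the favorable sign. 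Hence $\int|Hess_{f}u|^{2}\,dv_{f}\le\int(\Delta_{f}u)^{2}\,dv_{f} + (n-1)K\int|\nabla u|^{2}\,dv_{f}$, and the weighted Poincar\'e inequality $\int|\nabla u|^{2}\,dv_{f}\le\lambda_{1}^{-1}\int(\Delta_{f}u)^{2}\,dv_{f}$ (applied to $u$, which is $L^{2}(dv_{f})$-orthogonal to the constants) closes this step.

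Squaring the Cauchy--Schwarz bound, using $\|\Delta_{f}u\|_{L^{2}}^{2} = \int_{M}(R - \overline{R})^{2}\,dv_{f}$, and dividing through then produces~\eqref{T3e}. The main obstacle is the sign-bookkeeping in the core estimate: the specific form $Ric_{f}\ge(\Delta f - (n-1)K)g$---with $\Delta f$ appearing on the right-hand side rather than a plain constant---is exactly what cancels the otherwise uncontrolled $\int\Delta f|\nabla u|^{2}\,dv_{f}$ that arises when converting $|\nabla^{2}u|^{2}$ to $|Hess_{f}u|^{2}$, ensuring that the final constant $A$ agrees with the constant in the Riemannian analogue of~\cite{C1}.
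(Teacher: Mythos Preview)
Your proposal is correct and follows essentially the same route as the paper's proof: solve $\Delta_{f}u=R-\overline{R}$ with $\int_{M}u\,dv_{f}=0$, use the contracted Bianchi identity and weighted integration by parts to obtain $\int_{M}(R-\overline{R})^{2}dv_{f}=\frac{2n}{n-2}\int_{M}\langle R\mathring{i}c,\,Hess_{f}u-\frac{\Delta_{f}u}{n}g\rangle\,dv_{f}$, apply Cauchy--Schwarz, and then bound $\|Hess_{f}u-\frac{\Delta_{f}u}{n}g\|_{L^{2}}^{2}$ by $A\|\Delta_{f}u\|_{L^{2}}^{2}$ via the weighted Bochner formula, the curvature hypothesis, the inequality $|\nabla f\otimes\nabla u|^{2}\le|\nabla f|^{2}|\nabla u|^{2}$, and the eigenvalue bound $\int|\nabla u|^{2}dv_{f}\le\lambda_{1}^{-1}\int(\Delta_{f}u)^{2}dv_{f}$. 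The only cosmetic difference is that you package the Bochner step as an exact integral identity for $\int|Hess_{f}u|^{2}dv_{f}$ before applying the two pointwise inequalities, whereas the paper strings the same manipulations together as a chain of inequalities; the content is identical.
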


\bigskip{}

\begin{remark}
We note that our results are sharp since the constant $\frac{4n^{2}A}{\left(
n-2\right)  ^{2}}$ is the same as the results in \cite{C1} (for $K>0$) and
\cite{LT} (for $K=0$). So Theorem \ref{T2} implies the results of \cite{LT}
and \cite{C1} whenever we select $f$ is a constant.
\end{remark}

\bigskip{}

\begin{remark}
In Theorem \ref{T2}, \ref{T1}, \ref{T3}, we may select $f$ such that $\int
fdv_{f}=0$ since (\ref{eq}), (\ref{eq1}) and (\ref{T3e}) are valid whenever we
replace $dv_{f}$ by $dv_{f-\overline{f}}.$
\end{remark}

\begin{remark}
In Theorem \ref{T3}, if equality of (\ref{T3e}) holds, the question "$M$ is
trivial Einstein and constant scalar curvature" is still an open problem for
us (see detial in section 3). There are some partial results in the section 3,
and we still have no idea for other case.
\end{remark}

The paper is organized as follows. The proofs of Theorem \ref{T2} and \ref{T1}
are showed in section 2. In section 3, we prove Theorem \ref{T3}, and we also
give some parial results for the open problem in this section.

\section{Proof of Theorem \ref{T2} and \ref{T1}}

\textbf{Proof of Theorem \ref{T2}:}

\proof \ Assume $R$ is the nontrivial scalar curvature on $M$ with respect to
metric $g,$ and $R_{f}=R+\Delta f.$

According to Sobolev embedding theorem and calculus variation, there exists a
nontrivial solution $u:M\rightarrow%
\mathbb{R}%
$ of
\begin{equation}
\left\{
\begin{array}
[c]{c}%
\Delta_{f}u=R_{f}-\overline{R_{f}}\\
\int_{M}udv_{f}=0,
\end{array}
\right.  \label{1R1}%
\end{equation}
where
\[%
\begin{array}
[c]{c}%
\overline{R_{f}}=\frac{\int_{M}R_{f}dv_{f}}{V_{f}(M)}%
\end{array}
\text{ }%
\]
with $V_{f}(M)=\int_{M}dv_{f}$.

Since the second Bianchi identity $\operatorname{div}Ric=\frac{1}{2}\nabla R$ implies%

\[%
\begin{array}
[c]{lll}%
\left(  \operatorname{div}Ric_{f}\right)  _{j} & = & \left(
\operatorname{div}Ric\right)  _{j}+\left(  \operatorname{div}Hessf\right)
_{j}\\
& = & \nabla_{i}R_{ij}+\left(  \operatorname{div}Hessf\right)  _{j}\\
& = & \frac{1}{2}R_{,j}+\left(  \operatorname{div}Hessf\right)  _{j}\\
& = & \frac{1}{2}R_{f,j}-\frac{1}{2}\left(  \Delta f\right)  _{,j}+\left(
\operatorname{div}Hessf\right)  _{j}.
\end{array}
\]
Hence
\[%
\begin{array}
[c]{lll}%
\left(  \operatorname{div}R\mathring{i}c_{f}\right)  _{j} & = & \left(
\operatorname{div}Ric_{f}\right)  _{j}-\frac{R_{f,j}}{n}\\
& = & \frac{n-2}{2n}R_{f,j}-\frac{1}{2}\left(  \Delta f\right)  _{,j}+\left(
\operatorname{div}Hessf\right)  _{j},
\end{array}
\]
i.e.
\begin{equation}%
\begin{array}
[c]{l}%
\operatorname{div}R\mathring{i}c_{f}=\frac{n-2}{2n}\nabla R_{f}-\frac{1}%
{2}\nabla\Delta f+\operatorname{div}Hessf,
\end{array}
\label{001}%
\end{equation}
where $R\mathring{i}c_{f}=Ric_{f}-\frac{R_{f}}{n}g.$ Then, by using
\[%
\begin{array}
[c]{lll}%
\int_{M}\left\langle R\mathring{i}c_{f},hg\right\rangle dv_{f} & = & \int
_{M}\left\langle Ric_{f}-\frac{R_{f}}{n}g,hg\right\rangle dv_{f}\\
& = & \int_{M}\left(  R_{f}-R_{f}\right)  hdv_{f}\\
& = & 0,
\end{array}
\]
one has
\begin{equation}%
\begin{array}
[c]{lll}%
\int_{M}\left(  R_{f}-\overline{R_{f}}\right)  ^{2}dv_{f} & = & \int
_{M}\left(  R_{f}-\overline{R_{f}}\right)  \Delta_{f}udv_{f}=-\int
_{M}\left\langle \nabla R_{f},\nabla u\right\rangle dv_{f}\\
& = & \frac{-2n}{n-2}\int_{M}\left\langle \operatorname{div}R\mathring{i}%
c_{f}+\frac{1}{2}\nabla\Delta f-\operatorname{div}Hessf,\nabla u\right\rangle
dv_{f}\\
& = & \frac{2n}{n-2}\int_{M}\left\langle R\mathring{i}c_{f}-Hessf,Hess_{f}%
u\right\rangle +\frac{1}{2}\Delta f\Delta_{f}udv_{f}\\
& = & \frac{2n}{n-2}\int_{M}\left\langle R\mathring{i}c_{f}-Hessf,Hess_{f}%
u-hg\right\rangle +\frac{n-2}{2n}\Delta f\Delta_{f}udv_{f}\\
& \leq & \frac{2n}{n-2}\left\Vert R\mathring{i}c_{f}-Hessf\right\Vert _{L^{2}%
}\left\Vert Hess_{f}u-hg\right\Vert _{L^{2}}+\int_{M}\Delta f\Delta_{f}udv_{f}%
\end{array}
\label{1a}%
\end{equation}
where $\left\Vert \cdot\right\Vert _{L^{2}}^{2}=\int_{M}\left(  \cdot\right)
^{2}dv_{f},$ and we select
\begin{equation}%
\begin{array}
[c]{l}%
h=\frac{\Delta_{f}u}{n}.
\end{array}
\label{hh}%
\end{equation}

By Bochner's formula
\[%
\begin{array}
[c]{l}%
\frac{1}{2}\Delta_{f}\left\vert \nabla u\right\vert ^{2}=\left\vert
Hessu\right\vert ^{2}+\left\langle \nabla u,\nabla\Delta_{f}u\right\rangle
+Ric_{f}\left(  \nabla u,\nabla u\right)  ,
\end{array}
\]
we have
\begin{equation}%
\begin{array}
[c]{ll}
& \int_{M}\left\vert Hess_{f}u-\frac{\Delta_{f}u}{n}g\right\vert ^{2}dv_{f}\\
= & \int_{M}\left\vert Hess_{f}u\right\vert ^{2}-\frac{\left(  \Delta
_{f}u\right)  ^{2}}{n}dv_{f}\\
= & \int_{M}\left\vert Hessu\right\vert ^{2}-2Hessu\left(  \nabla f,\nabla
u\right)  +\frac{\left\vert \nabla f\right\vert ^{2}\left\vert \nabla
u\right\vert ^{2}+\left\langle \nabla f,\nabla u\right\rangle ^{2}}{2}%
-\frac{\left(  \Delta_{f}u\right)  ^{2}}{n}dv_{f}\\
\leq & \int_{M}\left(  1-\frac{1}{n}\right)  \left(  \Delta_{f}u\right)
^{2}-Ric_{f}\left(  \nabla u,\nabla u\right)  -\left\langle \nabla
f,\nabla\left\vert \nabla u\right\vert ^{2}\right\rangle +\left\vert \nabla
f\right\vert ^{2}\left\vert \nabla u\right\vert ^{2}dv_{f}\\
= & \int_{M}\left(  1-\frac{1}{n}\right)  \left(  \Delta_{f}u\right)
^{2}-Ric_{f}\left(  \nabla u,\nabla u\right)  +\Delta f\left\vert \nabla
u\right\vert ^{2}dv_{f}\\
\leq & \int_{M}\left(  1-\frac{1}{n}\right)  \left(  \Delta_{f}u\right)
^{2}+\left(  n-1\right)  K\left\vert \nabla u\right\vert ^{2}dv_{f}%
\end{array}
\label{1e4}%
\end{equation}
whenever $Ric_{f}\geq\left(  \Delta f-\left(  n-1\right)  K\right)  g.$

Since the first positive eigenvalue $\lambda_{1}$ of weighted Laplacian on $M
$ is characterized by
\[%
\begin{array}
[c]{c}%
\lambda_{1}=\inf\left\{  \frac{\int_{M}\left\vert \nabla\varphi\right\vert
^{2}dv_{f}}{\int_{M}\varphi^{2}dv_{f}}\text{ }|\text{ }\varphi\text{
\textrm{is nontrivial and} }\int_{M}\varphi dv_{f}=0\right\}  ,
\end{array}
\]
we get
\[%
\begin{array}
[c]{lll}%
\int_{M}\left\vert \nabla u\right\vert ^{2}dv_{f} & = & -\int_{M}u\Delta
_{f}udv_{f}=-\int_{M}u\left(  R_{f}-\overline{R_{f}}\right)  dv_{f}%
\leq\left\Vert u\right\Vert _{L^{2}}\left\Vert R_{f}-\overline{R_{f}%
}\right\Vert _{L^{2}}\\
& \leq & \lambda_{1}^{-1/2}||\nabla u||_{L^{2}}\left\Vert R_{f}-\overline
{R_{f}}\right\Vert _{L^{2}},
\end{array}
\]
for which it holds that
\begin{equation}%
\begin{array}
[c]{l}%
\lambda_{1}\int_{M}\left\vert \nabla u\right\vert ^{2}dv_{f}\leq\left\Vert
R_{f}-\overline{R_{f}}\right\Vert _{L^{2}}^{2}\text{ \ \textrm{and } }%
\lambda_{1}^{2}\int_{M}u^{2}dv_{f}\leq\left\Vert R_{f}-\overline{R_{f}%
}\right\Vert _{L^{2}}^{2}.
\end{array}
\label{122}%
\end{equation}
So (\ref{1e4}) becomes
\begin{equation}%
\begin{array}
[c]{c}%
\int_{M}\left\vert Hess_{f}u-\frac{\Delta_{f}u}{n}g\right\vert ^{2}dv_{f}\leq
A\left\Vert R_{f}-\overline{R_{f}}\right\Vert _{L^{2}}^{2},
\end{array}
\text{ } \label{123}%
\end{equation}
where
\[%
\begin{array}
[c]{l}%
A=\frac{n-1}{n}+\frac{1}{\lambda_{1}}\left(  n-1\right)  K,
\end{array}
\]
and we may rewrite (\ref{1a}) as
\[%
\begin{array}
[c]{ll}
& \int_{M}\left(  R_{f}-\overline{R_{f}}\right)  ^{2}dv_{f}\\
\leq & \frac{2n}{n-2}\left\Vert R\mathring{i}c_{f}-Hessf\right\Vert _{L^{2}%
}\left\Vert Hess_{f}u-\frac{\Delta_{f}u}{n}g\right\Vert _{L^{2}}+\int
_{M}\Delta f\Delta_{f}udv_{f}\\
\leq & \frac{2n\sqrt{A}}{n-2}\left\Vert R_{f}-\overline{R_{f}}\right\Vert
_{L^{2}}\left\Vert R\mathring{i}c_{f}-Hessf\right\Vert _{L^{2}}+\int_{M}\Delta
f\left(  R_{f}-\overline{R_{f}}\right)  dv_{f}\\
\leq & \frac{2n\sqrt{A}}{n-2}\left\Vert R_{f}-\overline{R_{f}}\right\Vert
_{L^{2}}\left\Vert R\mathring{i}c_{f}-Hessf\right\Vert _{L^{2}}+\left\Vert
R_{f}-\overline{R_{f}}\right\Vert _{L^{2}}\left\Vert \Delta f\right\Vert
_{L^{2}},
\end{array}
\]
i.e.
\begin{equation}%
\begin{array}
[c]{l}%
\left\Vert R_{f}-\overline{R_{f}}\right\Vert _{L^{2}}\leq\frac{2n\sqrt{A}%
}{n-2}\left\Vert R\mathring{i}c_{f}-Hessf\right\Vert _{L^{2}}+\left\Vert
\Delta f\right\Vert _{L^{2}}.
\end{array}
\label{1e}%
\end{equation}

If $"="$ of (\ref{1e}) holds, then we have the following properties:

\begin{itemize}
\item[(i)] $Ric_{f}\left(  \nabla u,\cdot\right)  =\left(  \Delta f-\left(
n-1\right)  K\right)  g\left(  \nabla u,\cdot\right)  ;$

\item[(ii)] $\mu_{1}(R\mathring{i}c_{f}-Hessf)=Hess_{f}u-\frac{\Delta_{f}u}%
{n}g,$ where $\mu_{1}$ is a non-zero constant;

\item[(iii)] $R_{f}-\overline{R_{f}}=-\lambda_{1}u=\Delta_{f}u=\mu_{2}\Delta
f,$ where $\mu_{2}$ is a non-zero constant;

\item[(iv)] $\nabla f=\alpha\nabla u.$
\end{itemize}

\bigskip{}

Since (iii) and (iv) imply
\[
0=\int_{M}\Delta_{f}udv_{f}=\mu_{2}\int_{M}\Delta fdv_{f}=\mu_{2}\int
_{M}\left\vert \nabla f\right\vert ^{2}dv_{f}%
\]
infers that $f$ must be constant on $M.$ Then theorem follows by \cite{C1} and
\cite{LT}.\ \endproof

\bigskip{}

\textbf{Proof of Theorem \ref{T1} :}

In the following, we show almost Schur lemma under the condition of
$m$-Bakry-Émery Ricci tensor which is similar to the work of \cite{W}. \ 

\bigskip{}

\proof Now we consider the nontrivial solution $u:M\rightarrow%
\mathbb{R}%
$ of
\begin{equation}
\left\{
\begin{array}
[c]{c}%
\Delta_{f}u=R-\overline{R}\\
\int_{M}udv_{f}=0
\end{array}
\right.  \label{a0}%
\end{equation}
where
\[%
\begin{array}
[c]{c}%
\overline{R}=\frac{\int_{M}Rdv_{f}}{V_{f}(M)}.
\end{array}
\text{ }%
\]

Since the second Bianchi identity $\operatorname{div}Ric=\frac{1}{2}\nabla R$
implies
\[%
\begin{array}
[c]{l}%
\operatorname{div}R\mathring{i}c=\frac{n-2}{2n}\nabla R
\end{array}
\]
where $\left(  \operatorname{div}Ric\right)  _{j}=\nabla_{i}R_{ij}$ and
$R\mathring{i}c=Ric-\frac{R}{n}g.$

Then, one has
\begin{equation}%
\begin{array}
[c]{lll}%
\int_{M}\left(  R-\overline{R}\right)  ^{2}dv_{f} & = & \int_{M}\left(
R-\overline{R}\right)  \Delta_{f}udv_{f}=-\int_{M}\left\langle \nabla R,\nabla
u\right\rangle dv_{f}\\
& = & \frac{-2n}{n-2}\int_{M}\left\langle \operatorname{div}R\mathring
{i}c,\nabla u\right\rangle dv_{f}\\
& = & \frac{2n}{n-2}\int_{M}\left\langle R\mathring{i}c,Hess_{f}u\right\rangle
dv_{f}\\
& = & \frac{2n}{n-2}\int_{M}\left\langle R\mathring{i}c,Hess_{f}%
u-hg\right\rangle dv_{f}\\
& \leq & \frac{2n}{n-2}\left\Vert R\mathring{i}c\right\Vert _{L^{2}}\left\Vert
Hess_{f}u-hg\right\Vert _{L^{2}}.
\end{array}
\label{a1}%
\end{equation}

Now we select
\[%
\begin{array}
[c]{l}%
h=\frac{\Delta_{f}u}{n}.
\end{array}
\]

By Bochner's formula
\[%
\begin{array}
[c]{l}%
\frac{1}{2}\Delta_{f}\left\vert \nabla u\right\vert ^{2}=\left\vert
Hessu\right\vert ^{2}+\left\langle \nabla u,\nabla\Delta_{f}u\right\rangle
+Ric_{f}\left(  \nabla u,\nabla u\right)  ,
\end{array}
\]
we have%

\begin{equation}%
\begin{array}
[c]{ll}
& \int_{M}\left\vert Hess_{f}u-\frac{\Delta_{f}u}{n}g\right\vert ^{2}dv_{f}\\
= & \int_{M}\left\vert Hessu-\nabla f\otimes\nabla u\right\vert ^{2}%
-\frac{\left(  \Delta_{f}u\right)  ^{2}}{n}dv_{f}\\
\leq & \int_{M}\left(  1+\frac{m}{2}\right)  \left\vert Hessu\right\vert
^{2}+\left(  1+\frac{2}{m}\right)  \left\vert \nabla f\otimes\nabla
u\right\vert ^{2}-\frac{\left(  \Delta_{f}u\right)  ^{2}}{n}dv_{f}\\
= & \int_{M}\left(  1-\frac{1}{n}+\frac{m}{2}\right)  \left(  \Delta
_{f}u\right)  ^{2}-\frac{m+2}{2}Ric_{f}\left(  \nabla u,\nabla u\right) \\
& +\frac{m+2}{2m}\left(  \left\vert \nabla f\right\vert ^{2}\left\vert \nabla
u\right\vert ^{2}+\left\langle \nabla f,\nabla u\right\rangle ^{2}\right)
dv_{f}\\
\leq & \int_{M}\left(  \frac{n-1}{n}+\frac{m}{2}\right)  \left(  \Delta
_{f}u\right)  ^{2}+\frac{\left(  m+2\right)  \left(  n-1\right)  K}%
{2}\left\vert \nabla u\right\vert ^{2}dv_{f},
\end{array}
\label{a2}%
\end{equation}
here we use $Ric_{f}^{m}\geq\left(  \frac{1}{m}\left\vert \nabla f\right\vert
^{2}-\left(  n-1\right)  K\right)  g.$

Hence, by the inequality of eigenvalue $\lambda_{1}$ (see (\ref{122})),
(\ref{a2}) gives
\[
\int_{M}\left\vert Hess_{f}u-\frac{\Delta_{f}u}{n}g\right\vert ^{2}dv_{f}\leq
A\left\Vert R-\overline{R}\right\Vert _{L^{2}}^{2},
\]
and then one has
\begin{equation}%
\begin{array}
[c]{l}%
\left\Vert R-\overline{R}\right\Vert _{L^{2}}\leq\frac{2n\sqrt{A}}%
{n-2}\left\Vert R\mathring{i}c\right\Vert _{L^{2}}%
\end{array}
\label{a10}%
\end{equation}
where
\[%
\begin{array}
[c]{c}%
A=\left(  \frac{n-1}{n}+\frac{m}{2}\right)  +\frac{(m+2)(n-1)}{2\lambda_{1}}K
\end{array}
\]
for any constant $m>0.$

If $"="$ holds, then $Hessu=-\frac{2}{m}\nabla f\otimes\nabla u$ on $M.$ For
which it implies
\begin{equation}%
\begin{array}
[c]{c}%
\Delta_{\frac{-2f}{m}}u=\Delta u+\frac{2}{m}\left\langle \nabla f,\nabla
u\right\rangle =0,
\end{array}
\label{e1}%
\end{equation}
i.e. $u$ is a weighted harmonic function with respect to weighted measure
$dv_{\frac{2f}{m}}$ on $M,$ it infers $u=0$ on $M$. So Theorem \ref{T1}
follows. \ \endproof

\bigskip

Combine Theorem \ref{T1} and Theorem \ref{T2}, it is clear that one has the
following property.

\bigskip

\begin{corollary}
Let $\left(  M^{n},g,dv_{f}\right)  ,$ $n>2,$ be a closed smooth metric
measure space. If
\[%
\begin{array}
[c]{c}%
Ric_{f}^{m}\geq\left(  \frac{1}{m}\left\vert \nabla f\right\vert ^{2}-\left(
n-1\right)  K\right)  Kg
\end{array}
\]
for any positive constant $m,$ then%
\[%
\begin{array}
[c]{c}%
\left\Vert R_{f}-\overline{R_{f}}\right\Vert _{L^{2}}\leq\frac{2n\sqrt{A}%
}{n-2}\left\Vert R\mathring{i}c_{f}-Hessf\right\Vert _{L^{2}}+\left\Vert
\Delta f\right\Vert _{L^{2}},
\end{array}
\]
where $\left\Vert \cdot\right\Vert _{L^{2}}^{2}=\int_{M}\left(  \cdot\right)
^{2}dv_{f}$ and
\[%
\begin{array}
[c]{c}%
A=\frac{n-1}{n}+\frac{m}{2}+\frac{\left(  m+2\right)  \left(  n-1\right)
}{2\lambda_{1}}K.
\end{array}
\]
Moreover, equality holds if and only if $M$ is trivial Einstein and constant
scalar curvature with respect to metric $g$.
\end{corollary}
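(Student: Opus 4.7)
The plan is to run the same test-function/divergence argument as in Theorem \ref{T2}, but replace the Bochner step that relied on the lower bound for $Ric_{f}$ by the Bochner computation of Theorem \ref{T1} that uses only the $m$-Bakry-\'Emery bound. In other words, pillar (a) of Theorem \ref{T2} (the divergence identity for $R\mathring{i}c_{f}$ and the resulting $R_{f}-\overline{R_{f}}$ structure, including the extra $\Delta f$ term) is kept unchanged; pillar (b) (the Hessian bound) is replaced by the one produced in Theorem \ref{T1}.

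First I would take the nontrivial $u$ solving $\Delta_{f}u=R_{f}-\overline{R_{f}}$ with $\int_{M}u\,dv_{f}=0$ (as in (\ref{1R1})). The identity (\ref{001}) for $\mathrm{div}\,R\mathring{i}c_{f}$ is a pure computation in the second Bianchi identity, independent of any curvature hypothesis, so it applies verbatim. Integration by parts against $\nabla u$, together with the choice $h=\Delta_{f}u/n$ that makes $\langle R\mathring{i}c_{f}-Hessf,hg\rangle$ integrate to zero, reproduces (\ref{1a}):
\[
\int_{M}(R_{f}-\overline{R_{f}})^{2}dv_{f}\leq\frac{2n}{n-2}\|R\mathring{i}c_{f}-Hessf\|_{L^{2}}\Big\|Hess_{f}u-\tfrac{\Delta_{f}u}{n}g\Big\|_{L^{2}}+\|\Delta f\|_{L^{2}}\|R_{f}-\overline{R_{f}}\|_{L^{2}}.
\]

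Next, for the traceless-Hessian norm, instead of expanding through $Ric_{f}$ as in (\ref{1e4}), I would follow the computation in (\ref{a2}): apply Young's inequality $|Hessu-\nabla f\otimes\nabla u|^{2}\leq(1+\tfrac{m}{2})|Hessu|^{2}+(1+\tfrac{2}{m})|\nabla f\otimes\nabla u|^{2}$, integrate $|Hessu|^{2}$ via Bochner (which converts it to $(\Delta_{f}u)^{2}-Ric_{f}(\nabla u,\nabla u)$ up to boundary terms that vanish on a closed manifold), and then use the hypothesis $Ric_{f}^{m}\geq\bigl(\frac{|\nabla f|^{2}}{m}-(n-1)K\bigr)g$ in the form $Ric_{f}(\nabla u,\nabla u)\geq\bigl(\frac{|\nabla f|^{2}}{m}-(n-1)K\bigr)|\nabla u|^{2}+\frac{1}{m}\langle\nabla f,\nabla u\rangle^{2}$. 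The $|\nabla f|^{2}|\nabla u|^{2}$ and $\langle\nabla f,\nabla u\rangle^{2}$ terms then cancel exactly, leaving
\[
\Big\|Hess_{f}u-\tfrac{\Delta_{f}u}{n}g\Big\|_{L^{2}}^{2}\leq\Big(\tfrac{n-1}{n}+\tfrac{m}{2}\Big)\|\Delta_{f}u\|_{L^{2}}^{2}+\tfrac{(m+2)(n-1)K}{2}\|\nabla u\|_{L^{2}}^{2}.
\]
Applying the Rayleigh bound $\lambda_{1}\|\nabla u\|_{L^{2}}^{2}\leq\|\Delta_{f}u\|_{L^{2}}^{2}=\|R_{f}-\overline{R_{f}}\|_{L^{2}}^{2}$ (see (\ref{122})) yields $\|Hess_{f}u-\tfrac{\Delta_{f}u}{n}g\|_{L^{2}}^{2}\leq A\|R_{f}-\overline{R_{f}}\|_{L^{2}}^{2}$ with $A$ as stated. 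Substituting this into the displayed estimate above and dividing by $\|R_{f}-\overline{R_{f}}\|_{L^{2}}$ produces the desired inequality.

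The main subtlety, and what I expect to be the main obstacle, is the characterization of equality. Equality in the Young step forces $Hessu=-\tfrac{2}{m}\nabla f\otimes\nabla u$, which (exactly as at the end of the proof of Theorem \ref{T1}) gives $\Delta_{-2f/m}u=0$ on the closed manifold and hence $u\equiv0$, so $R_{f}\equiv\overline{R_{f}}$. Combined with Cauchy--Schwarz equality on $\langle R\mathring{i}c_{f}-Hessf,Hess_{f}u-\tfrac{\Delta_{f}u}{n}g\rangle$ and the $\Delta f$ term (which collapses the $\|\Delta f\|_{L^{2}}$ slack only if $\Delta f$ is proportional to $R_{f}-\overline{R_{f}}\equiv0$), one reaches $\int|\nabla f|^{2}dv_{f}=0$ as in step (iii)--(iv) of Theorem \ref{T2}, so $f$ is constant. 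Once $f$ is constant the problem collapses to the classical De Lellis--Topping/Cheng setting, giving trivial Einstein with constant scalar curvature.
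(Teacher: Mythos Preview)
Your proposal is correct and is exactly the argument the paper has in mind: the paper's ``proof'' of the Corollary is the single sentence ``Combine Theorem~\ref{T1} and Theorem~\ref{T2}, it is clear that one has the following property,'' and what you have written is precisely that combination---the divergence identity and $R_{f}-\overline{R_{f}}$ structure of Theorem~\ref{T2} (in particular (\ref{001}) and (\ref{1a}), which are curvature-free) paired with the Bochner/Young estimate (\ref{a2}) of Theorem~\ref{T1} to bound $\|Hess_{f}u-\tfrac{\Delta_{f}u}{n}g\|_{L^{2}}$ under the $Ric_{f}^{m}$ hypothesis.

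One small cleanup in the equality discussion: once Young's equality forces $u\equiv0$ and hence $R_{f}\equiv\overline{R_{f}}$, you do not need to invoke Cauchy--Schwarz proportionality on the $\Delta f$ term (indeed, with $R_{f}-\overline{R_{f}}=0$ that Cauchy--Schwarz is vacuously an equality). The conclusion comes more directly: equality in the Corollary with left side $0$ forces the right side to vanish, so $\|\Delta f\|_{L^{2}}=0$ and $\|R\mathring{i}c_{f}-Hessf\|_{L^{2}}=0$; the first gives $f$ harmonic on a closed manifold, hence constant, and then the second reduces to $R\mathring{i}c=0$. Your reference to steps (iii)--(iv) of Theorem~\ref{T2} is not quite apt here, since those were derived for nontrivial $u$.
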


\section{Proof of Theorem \ref{T3}}

As the procedure form (\ref{a0}) to (\ref{a10}) in the proof of Theorem
\ref{T1}, but we replace (\ref{a2}) by the following formula,
\[%
\begin{array}
[c]{ll}
& \int_{M}\left\vert Hess_{f}u-\frac{\Delta_{f}u}{n}g\right\vert ^{2}dv_{f}\\
= & \int_{M}\left\vert Hess_{f}u\right\vert ^{2}-\frac{\left(  \Delta
_{f}u\right)  ^{2}}{n}dv_{f}\\
= & \int_{M}\left\vert Hessu\right\vert ^{2}-2Hessu\left(  \nabla f,\nabla
u\right)  +\frac{\left\vert \nabla f\right\vert ^{2}\left\vert \nabla
u\right\vert ^{2}+\left\langle \nabla f,\nabla u\right\rangle ^{2}}{2}%
-\frac{\left(  \Delta_{f}u\right)  ^{2}}{n}dv_{f}\\
\leq & \int_{M}\left(  1-\frac{1}{n}\right)  \left(  \Delta_{f}u\right)
^{2}-Ric_{f}\left(  \nabla u,\nabla u\right)  -\left\langle \nabla
f,\nabla\left\vert \nabla u\right\vert ^{2}\right\rangle +\left\vert \nabla
f\right\vert ^{2}\left\vert \nabla u\right\vert ^{2}dv_{f}\\
= & \int_{M}\left(  1-\frac{1}{n}\right)  \left(  \Delta_{f}u\right)
^{2}-Ric_{f}\left(  \nabla u,\nabla u\right)  +\Delta f\left\vert \nabla
u\right\vert ^{2}dv_{f}\\
\leq & \int_{M}\left(  1-\frac{1}{n}\right)  \left(  \Delta_{f}u\right)
^{2}+\left(  n-1\right)  K\left\vert \nabla u\right\vert ^{2}dv_{f}%
\end{array}
\]
whenever $Ric_{f}\geq\left(  \Delta f-\left(  n-1\right)  K\right)  g.$ So we
obtain
\begin{equation}%
\begin{array}
[c]{c}%
\int_{M}\left\vert Hess_{f}u-\frac{\Delta_{f}u}{n}g\right\vert ^{2}dv_{f}\leq
A\int_{M}\left(  R-\overline{R}\right)  ^{2}dv_{f},
\end{array}
\label{w1}%
\end{equation}
and then the inequality (\ref{T3e})
\begin{equation}%
\begin{array}
[c]{c}%
\int_{M}\left(  R-\overline{R}\right)  ^{2}dv_{f}\leq\frac{4n^{2}A}{\left(
n-2\right)  ^{2}}\int_{M}\left\vert R\mathring{i}c\right\vert ^{2}dv_{f}%
\end{array}
\label{w2}%
\end{equation}
holds, where
\[%
\begin{array}
[c]{l}%
A=\frac{n-1}{n}+\frac{1}{\lambda_{1}}\left(  n-1\right)  K.
\end{array}
\]

\bigskip{}

If $"="$ of (\ref{w2}) holds, we have the following properties:

\begin{itemize}
\item[(i)] $Ric_{f}\left(  \nabla u,\cdot\right)  =\left(  \Delta f-\left(
n-1\right)  K\right)  g\left(  \nabla u,\cdot\right)  ;$

\item[(ii)] $\mu R\mathring{i}c=Hess_{f}u-\frac{\Delta_{f}u}{n}g,$ where $\mu$
is a non-zero constant;

\item[(iii)] $R-\overline{R}=-\lambda_{1}u;$

\item[(iv)] $\nabla f=\alpha\nabla u.$
\end{itemize}

\bigskip{}

In the following, we focus on the case $K>0,$ we prove that if $"="$ of
(\ref{w2}) holds, and under the condition $\alpha\left(  p\right)  \leq
\frac{1}{n-1},$ then $M$ is trivial Einstein and constant scalar curvature
with respect to metric $g$, but it is still an open problem when
$\alpha\left(  p\right)  >\frac{1}{n-1}.$ In fact, by the compactness property
of $M,$ we only need to consider the case $\frac{1}{n-1}<\alpha\left(
p\right)  \leq C,$ for some constant $C.$

\begin{remark}
If $\alpha=0,$ then theorem follows by \cite{C1} (or \cite{LT} for $K=0$). So
we assume $\alpha\neq0.$
\end{remark}

\begin{lemma}
\label{ke1}$\mu$ must satisfy $\mu=\frac{2nA}{n-2}=\frac{2n}{n-2}\left(
\frac{n-1}{n}+\frac{1}{\lambda_{1}}\left(  n-1\right)  K\right)  ,$ or $M$ is
trivial Einstein and constant scalar curvature with respect to metric $g.$
\end{lemma}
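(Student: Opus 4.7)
The plan is to extract the relation $\mu^{2}=\frac{4n^{2}A^{2}}{(n-2)^{2}}$ by chaining the saturated versions of (\ref{w1}) and (\ref{w2}) through property (ii), and then to pin the sign of $\mu$ using the Cauchy--Schwarz step that appears in the derivation of Theorem \ref{T3}. First I would square property (ii) pointwise and integrate against $dv_{f}$ to obtain
\[
\int_{M}\left\vert Hess_{f}u-\frac{\Delta_{f}u}{n}g\right\vert^{2}dv_{f}=\mu^{2}\int_{M}\left\vert R\mathring{i}c\right\vert^{2}dv_{f}.
\]
Because equality holds in (\ref{w2}), every intermediate inequality along the way to (\ref{w2}) must itself be saturated; in particular (\ref{w1}) becomes
\[
\int_{M}\left\vert Hess_{f}u-\frac{\Delta_{f}u}{n}g\right\vert^{2}dv_{f}=A\int_{M}(R-\overline{R})^{2}dv_{f},
\]
while saturating (\ref{w2}) itself gives
\[
\int_{M}(R-\overline{R})^{2}dv_{f}=\frac{4n^{2}A}{(n-2)^{2}}\int_{M}\left\vert R\mathring{i}c\right\vert^{2}dv_{f}.
\]
Chaining these three identities yields $\mu^{2}\int_{M}\left\vert R\mathring{i}c\right\vert^{2}dv_{f}=\frac{4n^{2}A^{2}}{(n-2)^{2}}\int_{M}\left\vert R\mathring{i}c\right\vert^{2}dv_{f}$.

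Next I would split into two cases according to whether $R\mathring{i}c$ vanishes identically. If $R\mathring{i}c\equiv 0$, then $M$ is Einstein, so by the classical Schur theorem in dimension $n>2$ the scalar curvature $R$ is constant and $R=\overline{R}$. Property (iii) then forces $u\equiv 0$, whence property (iv) gives $\nabla f\equiv 0$, so $f$ is constant; this places $M$ into the "trivial Einstein with constant scalar curvature" alternative of the lemma. Otherwise $\int_{M}\left\vert R\mathring{i}c\right\vert^{2}dv_{f}>0$, and dividing yields $\mu^{2}=\frac{4n^{2}A^{2}}{(n-2)^{2}}$, so $\mu=\pm\frac{2nA}{n-2}$.

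To fix the sign I would revisit the Cauchy--Schwarz step that produces (\ref{w2}): the identity
\[
\int_{M}(R-\overline{R})^{2}dv_{f}=\frac{2n}{n-2}\int_{M}\left\langle R\mathring{i}c,\,Hess_{f}u-\frac{\Delta_{f}u}{n}g\right\rangle dv_{f},
\]
combined with property (ii), becomes $\frac{2n\mu}{n-2}\int_{M}\left\vert R\mathring{i}c\right\vert^{2}dv_{f}=\int_{M}(R-\overline{R})^{2}dv_{f}\geq 0$, which rules out the negative root and gives $\mu=\frac{2nA}{n-2}$, as claimed.

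The routine part is the algebraic bookkeeping needed to chain the saturated $L^{2}$ norms. The main obstacle, modest in this lemma, is the degenerate branch $R\mathring{i}c\equiv 0$: there Schur's theorem together with properties (iii) and (iv) must be combined correctly in order to promote the Einstein conclusion all the way to "trivial Einstein" by forcing $f$ to be constant.
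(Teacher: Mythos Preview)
Your argument is correct and follows essentially the same route as the paper: chain property (ii) with the saturated forms of (\ref{w1}) and (\ref{w2}) to get $\bigl(\mu^{2}-\tfrac{4n^{2}A^{2}}{(n-2)^{2}}\bigr)\int_{M}|R\mathring{i}c|^{2}dv_{f}=0$, then split on whether $R\mathring{i}c$ vanishes. You are in fact more careful than the paper on two points it leaves implicit: you use the pairing identity from (\ref{a1}) to rule out the negative root $\mu=-\tfrac{2nA}{n-2}$, and you invoke Schur together with (iii) and (iv) to upgrade ``Einstein'' to ``trivial Einstein'' in the degenerate branch.
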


\begin{proof}
By (ii), (\ref{w1}) and (\ref{w2}), we have
\[%
\begin{array}
[c]{lllll}%
\mu^{2}\int_{M}\left\vert R\mathring{i}c\right\vert ^{2}dv_{f} & = & \int
_{M}\left\vert Hess_{f}u-\frac{\Delta_{f}u}{n}g\right\vert ^{2}dv_{f} & = &
A\left\Vert R-\overline{R}\right\Vert _{L^{2}}^{2}\\
& = & \frac{4n^{2}A^{2}}{\left(  n-2\right)  ^{2}}\int_{M}\left\vert
R\mathring{i}c\right\vert ^{2}dv_{f} &  &
\end{array}
\]
which gives
\[%
\begin{array}
[c]{c}%
\left(  \mu^{2}-\frac{4n^{2}A^{2}}{\left(  n-2\right)  ^{2}}\right)  \int
_{M}\left\vert R\mathring{i}c\right\vert ^{2}dv_{f}=0.
\end{array}
\]
It is clear that if $\mu\neq\frac{2nA}{n-2},$ then $R\mathring{i}c=0$ and $M$
is trivial Einstein and constant scalar curvature with respect to metric $g$.
Hence $\mu=\frac{2nA}{n-2}.$
\end{proof}

\bigskip{}

By (i),
\[%
\begin{array}
[c]{l}%
R_{ij}u_{i}+f_{ij}u_{i}-u_{j}f_{ii}+\left(  n-1\right)  Ku_{j}=0
\end{array}
\]
implies
\begin{equation}%
\begin{array}
[c]{c}%
R_{ij,j}u_{i}+f_{ijj}u_{i}-f_{iij}u_{j}+R_{ij}u_{ij}+f_{ij}u_{ij}-u_{jj}%
f_{ii}+\left(  n-1\right)  Ku_{jj}=0.
\end{array}
\label{3e5}%
\end{equation}
And (ii) gives
\begin{equation}%
\begin{array}
[c]{lll}%
\mu R_{ij}u_{,ij} & = & \left\langle \frac{\mu R}{n}g+Hess_{f}u-\frac
{\Delta_{f}u}{n}g,Hessu\right\rangle \\
& = & \frac{\mu R}{n}\Delta u+\left\vert Hessu\right\vert ^{2}-\alpha
Hessu\left(  \nabla u,\nabla u\right)  -\frac{\Delta u-\alpha\left\vert \nabla
u\right\vert ^{2}}{n}\Delta u.
\end{array}
\label{3e6}%
\end{equation}

Now let $p\in M$ be the minimal point $p$ of $u,$ i.e. $u\left(  p\right)
=\inf_{M}u.$

Since (iv) implies%
\[%
\begin{array}
[c]{l}%
f_{ik}=\alpha_{k}u_{i}+\alpha u_{ik},
\end{array}
\]
then (\ref{3e5}) and (\ref{3e6}) can be rewritten as
\begin{equation}
\left\{
\begin{array}
[c]{lll}%
R_{ij}u_{,ij} & = & \alpha\left(  \Delta u\right)  ^{2}-\alpha\left\vert
Hessu\right\vert ^{2}-\left(  n-1\right)  K\Delta u,\\
\mu R_{ij}u_{,ij} & = & \frac{\mu R}{n}\Delta u+\left\vert Hessu\right\vert
^{2}-\frac{1}{n}\left(  \Delta u\right)  ^{2},
\end{array}
\right.  \label{3e7}%
\end{equation}
at $p.$ For which we have
\begin{equation}%
\begin{array}
[c]{lll}%
0 & = & \frac{\mu R}{n}\Delta u+\left(  1+\alpha\mu\right)  \left\vert
Hessu\right\vert ^{2}-\left(  \frac{1}{n}+\alpha\mu\right)  \left(  \Delta
u\right)  ^{2}+\left(  n-1\right)  \mu K\Delta u\\
& = & \left(  1+\alpha\mu\right)  \left\vert Hessu\right\vert ^{2}%
-\frac{1+\alpha\mu}{n}\left(  \Delta u\right)  ^{2}+\frac{\mu}{n}\left(
R-\left(  n-1\right)  \alpha\Delta u+n\left(  n-1\right)  K\right)  \Delta u
\end{array}
\label{1w0}%
\end{equation}
at $p.$

Since
\[%
\begin{array}
[c]{lll}%
R-\left(  n-1\right)  \alpha\Delta u+n\left(  n-1\right)  K & = & \overline
{R}+\Delta u-\left(  n-1\right)  \alpha\Delta u+n\left(  n-1\right)  K\\
& = & \left(  1-\left(  n-1\right)  \alpha\right)  \Delta u+\overline
{R}+n\left(  n-1\right)  K,
\end{array}
\]
so (\ref{1w0}) can be rewritten as
\begin{equation}%
\begin{array}
[c]{ll}
& \left(  1+\alpha\mu\right)  \left(  \left\vert Hessu\right\vert ^{2}%
-\frac{1}{n}\left(  \Delta u\right)  ^{2}\right)  +\frac{\mu}{n}\left(
1-\left(  n-1\right)  \alpha\right)  \left(  \Delta u\right)  ^{2}\\
& +\frac{\mu}{n}\left(  \overline{R}+n\left(  n-1\right)  K\right)  \Delta u\\
= & 0\text{ at }p.
\end{array}
\label{1w2}%
\end{equation}

Besides, due to curvature assumption
\begin{equation}%
\begin{array}
[c]{l}%
Ric+\alpha Hessu\geq\left(  \alpha\Delta u-\left(  n-1\right)  K\right)  g,
\end{array}
\label{1w10}%
\end{equation}
one has
\begin{equation}%
\begin{array}
[c]{l}%
R\geq\alpha\left(  n-1\right)  \Delta u-n\left(  n-1\right)  K,
\end{array}
\label{1w11}%
\end{equation}
and it gives%

\begin{equation}%
\begin{array}
[c]{lll}%
\overline{R} & \geq & \frac{\alpha\left(  n-1\right)  }{V_{f}\left(  M\right)
}\int_{M}\Delta udv_{f}-n\left(  n-1\right)  K\\
& = & \frac{\alpha^{2}\left(  n-1\right)  }{V_{f}\left(  M\right)  }\int
_{M}\left\vert \nabla u\right\vert ^{2}dv_{f}-n\left(  n-1\right)  K\\
& > & -n\left(  n-1\right)  K\text{ for all }\alpha\neq0,
\end{array}
\label{1w1}%
\end{equation}
here we use
\[%
\begin{array}
[c]{c}%
\Delta u=\Delta_{f}u+\alpha\left\vert \nabla u\right\vert ^{2}.
\end{array}
\]

If $\frac{-1}{\mu}\leq\alpha\left(  p\right)  \leq\frac{1}{n-1},$ then each
term in the left hand side of (\ref{1w2}) must be nonnegative at $p,$ so
$\Delta u\left(  p\right)  =0,$ which implies $R\left(  p\right)  =\sup_{M}R=$
$\overline{R}.$ Hence $M$ is trivial Einstein and constant scalar curvature
with respect to metric $g.$

If $\alpha\left(  p\right)  \leq-\frac{1}{\mu},$ at $p,$ we rewrite
(\ref{1w2}) as
\begin{equation}%
\begin{array}
[c]{l}%
\left(  1+\alpha\mu\right)  \left(  \left\vert Hessu\right\vert ^{2}-\left(
\Delta u\right)  ^{2}\right)  +\frac{\left(  n-1\right)  +\mu}{n}\left(
\Delta u\right)  ^{2}+\frac{\mu}{n}\left(  \overline{R}+n\left(  n-1\right)
K\right)  \Delta u=0,
\end{array}
\label{1w02}%
\end{equation}
at $p.$ We note that, at $p$, the $n\times n$ matrix $Hessu$ must be
semi-positive, then $\left\vert Hessu\right\vert ^{2}\leq\left(  \Delta
u\right)  ^{2}$ at $p$, and equality holds if only if the rank of
$Hessu\left(  p\right)  $ less than $2.$ For which each term in the left hand
side of (\ref{1w02}) must be nonnegative. So $\Delta u\left(  p\right)
=R\left(  p\right)  -\overline{R}=0,$ then $M$ is trivial Einstein and
constant scalar curvature with respect to metric $g$.

\bigskip{}

\bigskip{}

\bigskip{}
\end{document}